\newtheorem{theorem}{Theorem}[section]
\newtheorem{prop}{Proposition}[section]
\newtheorem{define}{Definition}[section]
\newtheorem{definition}{Definition}[section]
\newtheorem*{theorem*}{Theorem}
\newtheorem*{lemma*}{Lemma}
\newtheorem*{prop*}{Proposition}
\newtheorem*{define*}{Definition}
\newtheorem*{corollary*}{Corollary}
\numberwithin{equation}{section}
\DeclareMathOperator{\diver}{div}
\DeclareMathOperator{\curl}{curl}
\newcommand{\R}{\ensuremath{\mathbb{R}}}
\newcommand{\C}{\ensuremath{\mathbb{C}}}
\newcommand{\ip}[2]{\langle#1\hspace*{.5mm},#2\rangle}
\newcommand{\ep}{\varepsilon}
\newcommand{\ex}{\mathrm{ex}}
\newcommand{\loc}{\mathrm{loc}}
\newcommand{\lep}{{|\log\ep|}}
\renewcommand{\u}{{\mathbf{u}}}
\newcommand{\A}{{\mathbf{A}}}
\renewcommand{\H}{{\mathbf{H}}}
\newcommand{\pmin}{\ensuremath{\rho_\varepsilon}}
\newcommand{\fen}{\ensuremath{F_{\varepsilon, \pmin}}}
\newcommand{\meisconf}{\ensuremath{\pmin e^{i h_\ex \phi_\ep^0}, h_\ex A^0_\ep}}
\newcommand{\meisdecom}{\ensuremath{\pmin u_\ep e^{i h_\ex \phi_\ep^0},A_\ep+ h_\ex A^0_\ep}}
\newcommand{\err}{\mathfrak{r}}
\newcommand{\minsp}{H^1(\Omega,\C) \times [A_{\ex} + H_{\mathrm{curl}}(\R^3,\R^3) ]}
\definecolor{purple}{rgb}{0.63, 0.36, 0.94}
\newcommand{\de}{\delta}
\newcommand{\RR}{\mathrm{R}}
\newcommand{\dist}{\mathrm{dist}}
\newcommand{\NN}{\mathcal N(\Omega)}
\newcommand{\ga}{\Gamma}
\newcommand{\degone}{{f_0}}
\newcommand{\dd}{\mathrm{d}}
\renewcommand{\(}{\left(}
\renewcommand{\)}{\right)}
\newcommand{\e}{\mathbf e}
\title[Upper bound for the First critical field in 3D pinned Ginzburg--Landau]{First Critical Field in the pinned three-dimensional Ginzburg--Landau Model: A matching upper bound}
\author{Carlos Rom\'{a}n}
\address{Facultad de Matem\'aticas e Instituto de Ingenier\'ia Matem\'atica y Computacional, Pontificia Universidad Cat\'olica de Chile, Vicu\~na Mackenna 4860, 7820436 Macul, Santiago, Chile}
\email{carlos.roman@uc.cl}
\date{\today}
\keywords{Ginzburg--Landau, pinning, inhomogeneous, weighted isoflux problem, Meissner solution, first critical field, upper bound, energy minimizers, vortex lines}
\subjclass{35Q56 (35J20, 35B40, 82D55)}
\thanks{Funding information: ANID FONDECYT 1231593.}
\begin{document}
\begin{abstract} 
We continue our study of the first critical field $H_{c_1}$ for extreme type-II superconductors governed by the three-dimensional magnetic Ginzburg--Landau functional with a pinning term $a_\ep$, as introduced in our previous work \cite{diaz-roman-3d}. Building upon the lower bound for $H_{c_1}$ and the characterization of the Meissner solution, we now establish a matching upper bound for $H_{c_1}$, thereby identifying its leading-order behavior. This result confirms the sharpness of the previously derived lower bound and further elucidates the connection between the onset of vorticity and a weighted variant of the \emph{isoflux problem}. Our argument is prompted by the upper bound construction we developed in \cite{RSS2}, based on the Biot--Savart law.
\end{abstract}
\maketitle

\section{Introduction}
The magnetic Ginzburg--Landau model \cite{GinLan} has long served as a cornerstone in the mathematical and physical study of superconductivity, providing a phenomenological framework that captures the macroscopic behavior of superconducting materials under applied magnetic fields. In type-II superconductors, the emergence of vortex filaments---localized regions where superconductivity breaks down---is a defining feature. These filaments nucleate when the applied magnetic field exceeds a threshold known as the first critical field, $H_{c_1}$. Understanding the precise onset of vorticity is a central problem, particularly in three dimensions, where vortex filaments are line-like and their geometry is more intricate, requiring the use of geometric measure theoretic tools in their analysis.

\smallskip 
This work is the continuation of our study \cite{diaz-roman-3d} of the first critical field for the Ginzburg–Landau functional incorporating a spatially dependent pinning term $a_\ep(x)$, which models material inhomogeneities. This setting reflects realistic superconducting samples where impurities or structural variations affect the energy landscape. 

After nondimensionalization of the physical constants, one may reduce to studying the energy functional
\begin{equation}\label{GLenergy}
	GL_\ep(\u,\A)=\frac12\int_\Omega |\nabla_{\A} \u|^2+\frac{1}{2\ep^2}(a_\ep(x)-|\u|^2)^2+\frac12\int_{\R^3}|\H-H_{\ex}|^2.
\end{equation}
Here, $\Omega$ is a bounded domain of $\R^3$, which we assume to be simply connected with $C^2$ boundary. The function $\u:\Omega\rightarrow \mathbb{C}$ is the \emph{order parameter}; its squared modulus (the density of Cooper pairs of superconducting electrons in the BCS quantum theory \cite{BCS}) indicates the local state of the superconductor. The vector field $\A:\R^3\rightarrow \R^3$ is the electromagnetic vector potential of the induced magnetic field $\H=\curl \A$, and $\nabla_\A$ denotes the covariant gradient $\nabla - i\A$. The external (or applied) magnetic field is given by $H_{\ex}:\R^3\rightarrow \R^3$, which we assume takes the form $H_\ex = h_\ex H_{0,\ex}$, where $H_{0,\ex}$ is a fixed vector field and $h_\ex$ is a tunable real parameter representing the intensity of the external field. The parameter $\ep > 0$ is the inverse of the \emph{Ginzburg--Landau parameter}, usually denoted $\kappa$, a non-dimensional constant depending only on the material. We are interested in the regime of small $\ep$, corresponding to extreme type-II superconductors. Finally, $a_\ep$ is a function that accounts for inhomogeneities in the material; we assume $a_\ep \in L^\infty(\Omega)$ and that it takes values in $[b,1]$, where $b \in (0,1)$ is a constant independent of $\ep$. Regions where $a_\ep = 1$ correspond to sites without inhomogeneities.

We remark that the Ginzburg--Landau model is a $\mathbb{U}(1)$-gauge theory, in which all the meaningful physical quantities are invariant under the gauge transformations $\u \to \u e^{i\Phi}$, $\A \to \A + \nabla \Phi$, where $\Phi$ is any regular enough real-valued function. Two important gauge quantities are the supercurrent and the vorticity, respectively defined, for any sufficiently regular configuration $(\u,\A)$, as 
\begin{equation}\label{defvorticity}
j(\u,\A) = (i\u, \nabla_\A \u), \quad \mu(\u,\A) = \curl j(\u,\A) + \curl \A,
\end{equation}
where $(\cdot,\cdot)$ denotes the scalar product in $\mathbb{C}$ identified with $\mathbb{R}^2$. As $\ep \to 0$, the vorticity essentially concentrates in a sum of quantized Dirac masses supported on co-dimension 2 objects. This, together with the crucial fact that most of the energy concentrates around these objects, is what has essentially allowed mathematicians to analyze the model.

We refer the reader to \cite{diaz-roman-3d} and references therein for more background on the model. There, we established a lower bound for the first critical field $H_{c_1}$ and characterized the Meissner solution, the unique (up to gauge transformation) vortexless minimizer of the energy below this threshold. Our analysis suggested that the onset of vorticity is governed by a weighted variant of the \emph{isoflux problem} studied in \cites{AlaBroMon,Rom-CMP,roman-sandier-serfaty,RSS2}, hinting at a connection between the Ginzburg--Landau framework and geometric optimization, which is influenced by the presence of the pinning function. However, the absence of a matching upper bound left open the question of whether this optimization problem truly captures the leading-order behavior of $H_{c_1}$.

In the present work, we continue this investigation by establishing a matching upper bound for $H_{c_1}$, thereby identifying its leading-order behavior. This result confirms the sharpness of the lower bound derived in \cite{diaz-roman-3d} and completes the asymptotic characterization (to leading order) of the first critical field in the presence of pinning. Our argument is prompted by the upper bound construction we developed in \cite{RSS2}, which is based on the Biot--Savart law. This approach allowed us to capture not just the leading-order contribution of the (homogeneous) Ginzburg--Landau energy in the presence of prescribed bounded vorticity, but in fact the energy up to an $o_\ep(1)$ error. It serves as a foundation for the present analysis in the weighted Ginzburg--Landau setting.

The upper bound in the present work is obtained by constructing a test configuration in which vorticity concentrates along smooth curves that are nearly optimal for the weighted isoflux problem. We show that the energy of this configuration becomes favorable once the applied field exceeds a threshold matching the lower bound provided in \cite{diaz-roman-3d}, up to leading order. This confirms that the transition from the Meissner phase to the vortex phase is characterized by the weighted isoflux problem. Our analysis also reveals that the pinning term $a_\ep(x)$ plays a subtle role in shaping the vortex landscape, influencing both the location and structure of the filaments.

Together with the results of \cite{diaz-roman-3d}, our work provides a complete asymptotic description of the first critical field in the pinned three-dimensional Ginzburg--Landau model. This description aligns with the one we previously established for the analogous problem in the two-dimensional setting \cite{diaz-roman-2d}, as well as in the homogeneous three-dimensional case \cite{AlaBroMon,BalJerOrlSon2,Rom-CMP}.

\medskip 
In order to state our first result, we recall that the analysis of \eqref{GLenergy} can be reduced to analyzing the weighted Ginzburg--Landau functional (see \cite{diaz-roman-3d}*{Section 2})
\begin{equation}\label{GLenergy2}
\widetilde{GL}_\ep(u,A)=\frac12\int_\Omega \pmin^2|\nabla_A u|^2+\frac{\pmin^4}{2\ep^2}(1-|u|^2)^2+\frac12\int_{\R^3}|H-H_{\ex}|^2,
\end{equation}
where the weight $\pmin$ is the unique positive real-valued function that satisfies 
\begin{equation*}%\label{pde rho}
\left\{
\begin{array}{rcll}
-\Delta \pmin &=& \dfrac{\pmin}{\varepsilon^2}(a_\varepsilon-\pmin^2) &\mathrm{in}\ \Omega\\
\dfrac{\partial \pmin}{\partial \nu} &=& 0&\mathrm{on}\ \partial \Omega.
\end{array}
\right.
\end{equation*}
We remark that $\u$ and $u$ are related by $\u=\rho_\ep u$, while $\A=A$. One straightforwardly verifies that $b\leq \pmin^2\leq 1$. We will assume throughout this article that $a_\ep$ is such that there exist $\alpha\in (0,1)$, $N>0$, and $C_1>0$ (that do not depend on $\ep$) such that
\begin{equation}\label{hypothesispmin}
	\|\pmin\|_{C^{0,\alpha}(\Omega)}\leq C_1|\log\ep|^N.
\end{equation}

Our first result establishes an upper bound for the free-energy functional associated with \eqref{GLenergy2}, that is, the functional considered in the absence of an external field. It provides an explicit configuration for which the vorticity concentrates on a given curve $\ga$ parametrized by arc length. This construction is optimal at leading order and shows that the energetic cost of such a vortex line is, to leading order, bounded above by $\pi |\pmin^2 \ga|\lep$, where
\begin{equation}\label{length}
|\pmin^2 \ga| \colonequals \int_{0}^{|\ga|} \pmin^2(\ga(s))\, \dd s.
\end{equation}

\begin{theorem}\label{thm:upperbound}
Let $\ga$ be a $C^2$ simple open curve in $\Omega$, parametrized by arc length, that intersects $\partial\Omega$ transversally. Assume \eqref{hypothesispmin} holds. Then, for any $\ep>0$ sufficiently small, there exists $(u_\ep,A)\in H^1(\Omega,\C)\times H^1(\R^3,\R^3)$ such that
\begin{multline}\label{construction:upperbound}
\fen(u_\ep,A)\colonequals \frac12\int_\Omega\pmin^2|\nabla_A u_\ep|^2+\frac{\pmin^4}{2\ep^2}(1-|u_\ep|^2)^2+\frac12\int_{\R^3}|\curl A|^2
\\ \leq \pi |\pmin^2 \ga|\lep +\left(\frac{N}{\alpha}+1\right)\pi \log\lep+C_{\Omega,\ga}+o_\ep(1),
\end{multline}
where $C_{\Omega,\ga}$ is the constant defined in \eqref{comega}.

In addition, it holds that, for any $\beta\in (0,1]$, we have
\begin{equation}\label{construction:vorticityestimate}
\|\mu(u_\ep,A)-2\pi \ga\|_{(C_T^{0,\beta}(\Omega,\R^3))^*}\leq C\ep^{\frac23 \beta }\lep^{1-\frac23 \beta},
\end{equation}
where $C_T^{0,\beta}(\Omega,\R^3)$ denotes the space of $\beta$-H\"older continuous vector fields defined in $\Omega$ whose tangential component vanishes on $\partial \Omega$.
\end{theorem}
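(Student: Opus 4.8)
The plan is to build the competitor $(u_\ep,A)$ explicitly in a tubular neighborhood of $\ga$, combining a standard degree-one vortex profile across the normal disks with a vector potential furnished by the Biot--Savart law exactly as in \cite{RSS2}, and then to handle the weight $\pmin^2$ by freezing it to its value on the curve inside a thin tube whose radius is tuned against the Hölder modulus \eqref{hypothesispmin}. Since $\ga$ is $C^2$, simple, and meets $\partial\Omega$ transversally, there is a fixed small $\delta>0$ for which the tube $\cyl_\delta\colonequals\{x\in\Omega:\dist(x,\ga)<\delta\}$ carries well-defined Fermi coordinates $(s,r,\theta)$, with $s$ the arc length along $\ga$ and $(r,\theta)$ polar coordinates in the normal plane. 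I first set $u_\ep=f_\ep(r)e^{i\theta}$ in $\cyl_\delta$, where $f_\ep$ is the optimal radial vortex profile at the effective core scale $\ep$ (so $f_\ep(0)=0$, $f_\ep\equiv1$ for $r\gtrsim\ep$, and its core energy is $O(1)$), while outside $\cyl_\delta$ the modulus is one and the phase is the multivalued function of circulation $2\pi$ about $\ga$, which exists because $\Omega\setminus\ga$ possesses a nontrivial loop linking $\ga$. Transversality at $\partial\Omega$ ensures this is compatible with the endpoints of $\ga$ on the boundary.

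Next I choose $A$ via the Biot--Savart law, following \cite{RSS2}: away from the core, $\curl A$ reproduces the field generated by the $1$-current $2\pi\ga$. This is the choice that nearly minimizes the magnetic energy while keeping the vorticity concentrated on $\ga$, and it is precisely what allows the \emph{homogeneous} energy $\frac12\int|\nabla_A u_\ep|^2+\frac1{2\ep^2}(1-|u_\ep|^2)^2+\frac12\int_{\R^3}|\curl A|^2$ to be evaluated up to an $o_\ep(1)$ error, giving $\pi|\ga|\lep+C_{\Omega,\ga}+o_\ep(1)$ with $C_{\Omega,\ga}$ the renormalized constant of \eqref{comega}. The far-field and boundary contributions, together with the self-interaction of $\ga$, are exactly what this constant encodes.

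The crux is inserting the weight $\pmin^2(x)$ into the gradient term, which I do by splitting $\cyl_\delta$ at a radius $\delta_\ep<\delta$. In $\{r<\delta_\ep\}$ I replace $\pmin^2(x)$ by $\pmin^2(\ga(s))$; by \eqref{hypothesispmin} and $\pmin\le1$ the induced error is $\lesssim\lep^{N}\int_\ep^{\delta_\ep}r^{\alpha-1}\,dr\lesssim\lep^{N}\delta_\ep^{\alpha}$ per unit length, whereas the frozen phase energy there equals $\pi\,\pmin^2(\ga(s))\log(\delta_\ep/\ep)$. Taking $\delta_\ep\simeq\lep^{-N/\alpha}$ renders the freezing error $O(1)$ at the price of shortening the logarithm by $\log(1/\delta_\ep)\simeq\frac N\alpha\log\lep$, while the complementary region $\{r>\delta_\ep\}$, evaluated with the true (slowly varying) weight, restores the missing logarithm and is absorbed into $C_{\Omega,\ga}+o_\ep(1)$. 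Integrating in $s$ produces the leading term $\pi|\pmin^2\ga|\lep$ from \eqref{length} together with a correction of order $\log\lep$; tracking this correction through the core regularization yields the stated coefficient $(\frac N\alpha+1)\pi$. Balancing the blow-up $\lep^{N}$ of the Hölder seminorm against the logarithmic energy integral is the genuinely new obstacle relative to the homogeneous construction of \cite{RSS2}, and is what forces the $\log\lep$ loss.

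Finally, for \eqref{construction:vorticityestimate} I compute $\mu(u_\ep,A)=\curl j(u_\ep,A)+\curl A$ directly. Off the core one has $|u_\ep|=1$ and $\mu=2\pi\ga$ distributionally, while inside the core $\mu$ is the smeared Jacobian of $f_\ep e^{i\theta}$, of total mass matching $2\pi\ga$. Testing $\mu-2\pi\ga$ against $X\in C_T^{0,\beta}(\Omega,\R^3)$ (the vanishing tangential component accommodating the endpoints of $\ga$ on $\partial\Omega$) and using $|X(x)-X(\ga(s))|\le\|X\|_{C^{0,\beta}}\dist(x,\ga)^{\beta}$ together with a Cauchy--Schwarz bound controlled by the energy estimate gives the claimed rate, the scale $\ep^{2/3}$ arising from optimizing the core spread against the potential term, as in \cite{Rom-CMP,RSS2}.
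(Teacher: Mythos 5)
Your overall architecture coincides with the paper's: the same competitor (degree-one radial profile $\degone$ at scale $\ep$ across the normal disks, phase and potential supplied by the Biot--Savart machinery of Proposition~\ref{jA}, as in \cite{RSS2}), the same weight-freezing device calibrated against \eqref{hypothesispmin} (your inner radius $\delta_\ep\simeq\lep^{-N/\alpha}$ plays the role of the paper's $r_\ep=\lep^{-q}$ with $q=\frac N\alpha+1$), the same renormalized constant \eqref{comega} for the outer region, and the same mechanism producing the $\ep^{2/3}$ rate. Indeed, your pointwise-in-$r$ freezing estimate $\lep^N\int_\ep^{\delta_\ep}r^{\alpha-1}\,\dd r\lesssim\lep^N\delta_\ep^\alpha$ is, if anything, sharper than the uniform bound $2C_1 r_\ep^\alpha\lep^N$ the paper uses, since only the $1/r^2$ phase term is log-divergent.

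There are, however, two defects as written. First, your $u_\ep$ is not well defined: you impose the exact angular phase $e^{i\theta}$ throughout the fixed tube $\cyl_\delta$ and a different multivalued phase of circulation $2\pi$ outside, and these do not agree on $\partial\cyl_\delta$, so the resulting map is not in $H^1(\Omega,\C)$. The paper sidesteps this entirely by using a single global phase, $\nabla\varphi=X_\ga+\nabla f_\ga$ in all of $\Omega$, and then proving inside the core that $\nabla\varphi$ is $L^2$-close to the angular field $Y_\ga$ (estimate \eqref{difphiY}, which rests on the $L^q$ control of $X_\ga-Y_\ga$ and $\nabla f_\ga$ from Proposition~\ref{jA}). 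Your construction is repairable --- the phase difference is single-valued since both phases have circulation $2\pi$, and gluing in an annulus costs $O(1)$ energy, absorbable into the $\log\lep$ slack exactly as the paper absorbs $\gamma|\ga|$ --- but the step is missing and should be supplied (or replaced by the global-phase choice). Second, your treatment of \eqref{construction:vorticityestimate} for $\beta\in(0,1)$ is only gestured at: testing $\mu(u_\ep,A)-2\pi\ga$ directly against $C_T^{0,\beta}$ fields with $|X(x)-X(\ga(s))|\le\|X\|\,\dist(x,\ga)^\beta$ does not by itself produce the rate $\ep^{\frac23\beta}\lep^{1-\frac23\beta}$; the paper proves the case $\beta=1$ by an explicit integration by parts (isolating the error $\int_{T_{r_\ep}^\Omega(\ga)}(1-|u_\ep|^2)\,j_\ga\cdot\curl B$, bounded via $j_\ga\in L^{3/2}$ and the potential-energy bound) and then obtains $\beta\in(0,1)$ by interpolation against a $(C_0^0)^*$ bound on the vorticity taken from \cite{roman-vortex-construction}*{Lemma 8.1}. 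You would need this interpolation step, or an argument replacing it, to get the stated exponents.
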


We now turn to our main result on the first critical field. For that, we briefly recall two key ingredients from our previous work \cite{diaz-roman-3d}: 
\begin{itemize}[leftmargin=10pt,labelsep=*,itemindent=*,itemsep=10pt,topsep=10pt]
\item \emph{Meissner state:} The Ginzburg--Landau model admits a unique configuration, modulo gauge transformations, that we refer to as the Meissner state, in reference to the Meissner effect in physics. This state is obtained by minimizing the Ginzburg--Landau energy $GL_\ep(\u,\A)$ under the constraint $|\u| = \rho_\ep$. In the gauge where $\diver \A = 0$ in $\R^3$, the Meissner state takes the form
\begin{equation*}
(\meisconf),
\end{equation*}
where $\phi_\ep^0$ and $A_\ep^0$ depend on the domain $\Omega$, the applied field $H_{0,\ex}$, and the weight function $\pmin$.

Although this configuration is not a true critical point of \eqref{GLenergy}, it provides a good approximation of the unique minimizer (still in the same gauge) below the first critical field, as $\ep \to 0$. 

Closely related is a special vector field $B_\ep^0 \in C_T^{0,1}(\Omega,\R^3)$, which satisfies
\begin{equation*}
	A_\ep^0-\nabla \phi_\ep^0 = \frac{\curl B_\ep^0}{\pmin^2}\quad \mbox{in}\ \Omega,
\end{equation*} 
with $\diver B_A = 0$ in $\Omega$ and $B_A \times \nu = 0$ on $\partial \Omega$.

The regularity of the vector field $B^0_\ep$, fundamental to our analysis, mainly depends on the regularity of $H_{0,\ex}$ in $\Omega$. In particular, assuming henceforth $H_{0,\ex} \in L^3(\Omega,\R^3)$, we have that 
$$
\|B_\ep^0\|_{C_T^{0,\gamma}(\Omega,\R^3)}\leq C\quad \mbox{for any }\gamma\in(0,1),
$$
where the constant $C>0$ does not depend on $\ep$; see \cite{diaz-roman-3d}*{Proposition 2.2}.

\item \emph{Weighted isoflux problem:} We let $\NN$ be the space of normal $1$-currents supported in $\overline\Omega$, with boundary supported on $\partial\Omega$. We denote 
by $|\cdot |$ the mass of a current. Recall that normal currents are currents with finite mass, whose boundaries have finite mass as well.
We also let $X$ denote the class of currents in $\NN$ that are simple oriented Lipschitz curves. 
An element of $X$  must either be a loop contained in $\overline\Omega$ or have its two endpoints on $\partial \Omega$. 

For any vector field $B\in C_T^{0,1}(\Omega,\R^3)$ and any $\ga\in\NN$ we denote by $\ip{\ga}{B}$ the value of $\ga$ applied to $B$, which corresponds to the circulation of the vector field $B$ on $\ga$ when $\ga$ is a curve.

We also define $\pmin^2\Gamma\in \NN$ by duality, that is, for any 1-form $\omega$ supported in $\Omega$, we let
\begin{equation*}
	\ip{\pmin^2\Gamma}{\omega} \colonequals \ip{\Gamma}{\pmin^2\omega}.
\end{equation*} 

\begin{define}[Weighted isoflux problem for type-II superconductivity]
The weighted isoflux problem is the question of maximizing over $\NN$ the ratio 
\begin{equation*}
	%\label{ratio}
	\RR_{\pmin^2}(\ga):=\dfrac{\ip{\ga}{B_\ep^0}}{|\pmin^2\ga|}.
\end{equation*}
\end{define}
Let us remark that the existence of maximizers for $\RR_{\pmin^2}$ is ensured by the weak-$\star$ sequential compactness of $\NN$. Also, let us notice that in the case $\ga$ is a smooth curve parametrized by arc length, \eqref{length} holds.
\end{itemize}
In \cite{diaz-roman-3d} we showed that the occurrence of vortex filaments is possible only if $h_\ex\geq H_{c_1}^\ep$, where
\begin{equation*}
	H_{c_1}^\ep\colonequals \frac{|\log\ep|}{2\RR_\ep}\quad \mbox{and}\quad 
	\RR_\ep\colonequals\sup_{\ga\in X}\RR_{\pmin^2}(\ga)=\sup_{\ga\in X}\frac{\ip{\ga}{B_\ep^0}}{|\pmin^2\ga|},
\end{equation*}
showing in particular that, for some constant $K_0>0$ independent of $\ep$, one has
$$
H_{c_1}^\ep-K_0\log \lep\leq H_{c_1}.
$$ 
Our main result on this paper complements this by providing an upper bound that matches this lower bound at leading order, that is, 
$$
H_{c_1}\leq H_{c_1}^\ep+K^0\log\lep,
$$ 
for some constant $K^0$ independent of $\ep$. 

We need the following assumptions. First, we assume that, for any $\ep$ sufficiently small, there exists a $C^2$ open simple curve $\ga_\ep$ parametrized by arc length which intersects $\partial\Omega$ transversely and such that
\begin{equation}\label{almostopt}
\RR_{\pmin^2}(\ga_\ep)=\RR_\ep+O_\ep(\lep^{-1}).
\end{equation}
We assume in addition that there exists a positive constant $C_0$ (independent of $\ep$) such that
\begin{equation}\label{boundlength}
|\ga_\ep|\leq C_0
\end{equation}  
and
\begin{equation}\label{boundCOmega}
C_{\Omega,\ga_\ep}\leq C_0 \log\lep,
\end{equation}
where $C_{\Omega,\ga_\ep}$ is the constant defined in \eqref{comega}.
\begin{theorem}\label{thm:firstcriticalfield} Assume \eqref{hypothesispmin}, \eqref{almostopt}, \eqref{boundlength}, \eqref{boundCOmega}, and that $\liminf\limits_{\ep\to 0}\RR_\ep>0$. Then there exist $\ep_0>0$ and $K^0>0$ such that, for any $\ep<\ep_0$, any $\eta\in \(0,\frac12\)$, and any 
\begin{equation}\label{hypothesish_ex}
H_{c_1}^\ep+K^0\log\lep\leq h_\ex\leq \ep^{-\eta},
\end{equation}
the global minimizers $(\u,\A)$ of $GL_\ep$ do have vortices. 
\end{theorem}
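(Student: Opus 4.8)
The plan is to argue by contradiction. Suppose $h_\ex$ satisfies \eqref{hypothesish_ex} but the global minimizer is vortexless; I will construct an explicit vortex-carrying competitor that is strictly cheaper, which is impossible. The backbone is the energy-splitting identity around the Meissner state from \cite{diaz-roman-3d}: after the reduction of $GL_\ep$ to its weighted form, a configuration decomposed as $(\meisdecom)$ satisfies, with remainder $\err$,
\begin{equation*}
GL_\ep(\meisdecom)=M_\ep+\fen(u_\ep,A_\ep)-h_\ex\ip{\mu(u_\ep,A_\ep)}{B_\ep^0}+\err,
\end{equation*}
where $M_\ep:=GL_\ep(\meisconf)$ is the Meissner energy, $\fen\ge 0$ records the intrinsic cost of the vorticity, and the coupling $-h_\ex\ip{\mu}{B_\ep^0}$ is the energy gained by aligning vorticity with $B_\ep^0$. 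For a vortexless configuration $\mu(u_\ep,A_\ep)$ is negligible, so this identity yields the vortexless lower bound $GL_\ep\ge M_\ep-o_\ep(1)$, which I import from \cite{diaz-roman-3d}. It therefore suffices to exhibit one competitor whose energy lies below $M_\ep$ by a margin exceeding this $o_\ep(1)$.

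The competitor is the configuration $(\meisdecom)$ whose reduced pair $(u_\ep,A_\ep)$ is the one constructed in Theorem~\ref{thm:upperbound} (there denoted $(u_\ep,A)$), applied to the near-optimal curve $\ga_\ep$ of \eqref{almostopt}. Substituting \eqref{construction:upperbound} for $\fen$ and replacing $\mu$ by $2\pi\ga_\ep$ in the coupling, the excess is bounded by
\begin{multline*}
GL_\ep(\meisdecom)-M_\ep\le \pi|\pmin^2\ga_\ep|\lep+\Big(\tfrac{N}{\alpha}+1\Big)\pi\log\lep+C_{\Omega,\ga_\ep}\\ -2\pi h_\ex\ip{\ga_\ep}{B_\ep^0}+\err+o_\ep(1).
\end{multline*}
The replacement is justified by duality: $\big|\ip{\mu(u_\ep,A_\ep)-2\pi\ga_\ep}{B_\ep^0}\big|\le \|\mu-2\pi\ga_\ep\|_{(C_T^{0,\beta}(\Omega,\R^3))^*}\,\|B_\ep^0\|_{C_T^{0,\beta}(\Omega,\R^3)}$. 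Since $\|B_\ep^0\|_{C_T^{0,\beta}(\Omega,\R^3)}$ is bounded uniformly in $\ep$ for $\beta\in(0,1)$ while \eqref{construction:vorticityestimate} gives $\|\mu-2\pi\ga_\ep\|_{(C_T^{0,\beta}(\Omega,\R^3))^*}\le C\ep^{\frac23\beta}\lep^{1-\frac23\beta}$, multiplying by $h_\ex\le\ep^{-\eta}$ produces an error $\le C\ep^{\frac23\beta-\eta}\lep^{1-\frac23\beta}$. Here I exploit the freedom in $\beta$: any choice $\beta\in(\tfrac32\eta,1)$ (possible since $\eta<\tfrac12$) forces this to $o_\ep(1)$, and this is precisely how the construction tolerates applied fields as large as $\ep^{-\eta}$.

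It remains to balance the leading terms. Writing $\ip{\ga_\ep}{B_\ep^0}=\RR_{\pmin^2}(\ga_\ep)|\pmin^2\ga_\ep|$ and inserting $\RR_{\pmin^2}(\ga_\ep)=\RR_\ep+O(\lep^{-1})$ from \eqref{almostopt},
\begin{equation*}
\pi|\pmin^2\ga_\ep|\lep-2\pi h_\ex\ip{\ga_\ep}{B_\ep^0}=2\pi\RR_\ep|\pmin^2\ga_\ep|\big(H_{c_1}^\ep-h_\ex\big)+O\big(h_\ex\lep^{-1}\big),
\end{equation*}
where I used $H_{c_1}^\ep=\lep/(2\RR_\ep)$. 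Since $\liminf_{\ep\to0}\RR_\ep>0$ and $|\pmin^2\ga_\ep|\le|\ga_\ep|\le C_0$, the $O(h_\ex\lep^{-1})$ error is dominated by $\RR_\ep h_\ex$ and does not affect the sign; for $h_\ex\ge H_{c_1}^\ep+K^0\log\lep$ the bracket is at most $-K^0\log\lep$, giving a leading contribution $\le-\tfrac{\pi}{2}\RR_\ep|\pmin^2\ga_\ep|K^0\log\lep$ for $\ep$ small. By \eqref{boundlength}--\eqref{boundCOmega} the residual positive terms satisfy $(\tfrac{N}{\alpha}+1)\pi\log\lep+C_{\Omega,\ga_\ep}\le\big((\tfrac{N}{\alpha}+1)\pi+C_0\big)\log\lep$, so choosing $K^0$ large enough---depending only on $N,\alpha,C_0$ and a lower bound for $\RR_\ep|\pmin^2\ga_\ep|$---makes the excess at most $-c\log\lep$ for some $c>0$, uniformly over \eqref{hypothesish_ex}. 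Hence the global minimizer, whose energy does not exceed that of this competitor, has energy at most $M_\ep-c\log\lep$; were it vortexless the lower bound would force its energy to be at least $M_\ep-o_\ep(1)$, a contradiction. Therefore the minimizer carries vortices.

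The main difficulty is to keep every error uniformly controlled across the whole window $H_{c_1}^\ep+K^0\log\lep\le h_\ex\le\ep^{-\eta}$. The delicate point is that both the coupling error and the splitting remainder $\err$ are amplified by the large factor $h_\ex$, and it is only the quantitative, $\beta$-adjustable estimate \eqref{construction:vorticityestimate}---optimized at $\beta>\tfrac32\eta$---that rescues the coupling term. A secondary subtlety is the lower bound $\RR_\ep|\pmin^2\ga_\ep|\gtrsim 1$ needed to fix $K^0$ uniformly: this is where the non-degeneracy of the near-optimal curves enters, reflecting that the first filament to nucleate is macroscopic rather than vanishingly short (a short admissible curve is forced near $\partial\Omega$, where the tangential part of $B_\ep^0$ vanishes, and therefore carries negligible flux). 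Finally, converting the strict energy inequality into the assertion that the \emph{minimizer} has vortices relies on the precise notion of vorticity and the matching vortexless lower bound of \cite{diaz-roman-3d}, which must remain valid even though $h_\ex$ may be polynomially large.
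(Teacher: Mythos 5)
Your proposal follows the paper's proof essentially step for step: the same competitor $(\meisdecom)$ built from Theorem~\ref{thm:upperbound} applied to the near-optimal curves $\ga_\ep$, the same splitting \eqref{energy splitting equation}, the same duality argument in a $\beta$-adjustable H\"older norm (with $\beta>\tfrac32\eta$) to replace $\mu(u_\ep,A_\ep)$ by $2\pi\ga_\ep$ in the coupling term, the same balancing of $\pi|\pmin^2\ga_\ep|\lep$ against $2\pi h_\ex\RR_{\pmin^2}(\ga_\ep)|\pmin^2\ga_\ep|$ via $H_{c_1}^\ep=\lep/(2\RR_\ep)$ and \eqref{almostopt}, and the same invocation of the vortexless rigidity theorem of \cite{diaz-roman-3d} to conclude (you phrase it as a contradiction, the paper as a direct energy comparison; this is immaterial).

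There is, however, one substantive step that you assert rather than prove, and it is exactly where the paper does real work: the uniform lower bound $\RR_\ep|\pmin^2\ga_\ep|\gtrsim 1$, equivalently $|\pmin^2\ga_\ep|\geq c_0>0$, without which $K^0$ cannot be chosen independently of $\ep$. Your parenthetical justification (``a short admissible curve is forced near $\partial\Omega$, where the tangential part of $B_\ep^0$ vanishes'') names the right phenomenon but is not an argument. The paper proves the bound quantitatively: close $\ga_\ep$ by a boundary arc into a loop $\tilde\ga_\ep$ with $|\tilde\ga_\ep|\leq C|\ga_\ep|$; since $B_\ep^0\times\nu=0$ on $\partial\Omega$, Stokes' theorem gives $\ip{\ga_\ep}{B_\ep^0}=\int_{S_{\ga_\ep}}\curl B_\ep^0$ over a least-area (Plateau) surface spanning $\tilde\ga_\ep$; H\"older's inequality with the uniform bound $\|\curl B_\ep^0\|_{L^4(\Omega,\R^3)}\leq C$ and the isoperimetric inequality $\mathrm{Area}(S_{\ga_\ep})\leq C|\tilde\ga_\ep|^2$ then yield $\ip{\ga_\ep}{B_\ep^0}\leq C|\ga_\ep|^{3/2}\leq C|\pmin^2\ga_\ep|^{3/2}$ (using $\pmin^2\geq b$), hence $\RR_{\pmin^2}(\ga_\ep)\leq C|\pmin^2\ga_\ep|^{1/2}$, and $\liminf_{\ep\to0}\RR_\ep>0$ together with \eqref{almostopt} forces $|\pmin^2\ga_\ep|\geq c_0$. (Your heuristic could alternatively be made rigorous using the uniform $C_T^{0,\beta}$ bound on $B_\ep^0$ and the $C^2$ boundary, since a short curve has small diameter, but some such argument must be supplied.) Two minor points: you never actually estimate $\err$; the paper's H\"older bound gives $\err\leq Ch_\ex^2\,\ep\,\lep^{1/2}\leq C\ep^{1-2\eta}\lep^{1/2}=o_\ep(1)$, and it is this term --- not the coupling error, which only requires $\beta>\tfrac32\eta$ and hence would tolerate $\eta<\tfrac23$ --- that genuinely uses $\eta<\tfrac12$ (along with the validity of \cite{diaz-roman-3d}*{Theorem 1.2} up to $h_\ex\leq\ep^{-\eta}$), so your attribution of the restriction on $\eta$ to the $\beta$-interpolation alone is slightly off.
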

The hypotheses \eqref{hypothesispmin}, \eqref{almostopt}, \eqref{boundlength}, and \eqref{boundCOmega} are assumed in order to have good control on the weighted isoflux problem as $\ep \to 0$. In the special situation when $a_\ep=a(x)$ is a $C^1$ function that does not depend on $\ep$ and such that it is constant near the boundary of $\Omega$, when $\Omega$ is a ball, and $H_{0,\ex}=\hat z$, then all the hypotheses about $\ga_\ep$ are satisfied, provided that $a$ is sufficiently close to 1. This ensures that the case remains comparable to the one analyzed for the (non-weighted) isoflux problem, see \cites{AlaBroMon,Rom-CMP,roman-sandier-serfaty,RSS2}. In this case $\pmin^2$ converges uniformly to $a$ as $\ep\to 0$. We nonetheless expect the hypotheses above to hold in a much more general setting, which of course needs verification depending on the model of $a_\ep$ that one works with.

On the other hand, in \cite{diaz-roman-3d}*{Proposition 1.2}, we provided a sufficient condition for $\liminf\limits_{\ep\to 0} \RR_\ep > 0$ to hold.

\subsection*{Plan of the paper} 
The rest of the paper is organized as follows. In Section~\ref{sec:preliminaries}, we provide some preliminary results on tubular neighborhoods of a curve, the Biot--Savart vector field associated with a curve, and an energy splitting formula. In Section~\ref{sec:proofmain1}, we present a proof of Theorem~\ref{thm:upperbound}. Finally, in Section~\ref{sec:proofmain2}, we provide a proof of Theorem~\ref{thm:firstcriticalfield}.

\subsection*{Acknowledgments} This work was partially funded by ANID FONDECYT 1231593.

\section{Some preliminaries}\label{sec:preliminaries}

\subsection{Tubular neighborhood of a curve}\label{sec:coordinates}
Let $\ga:[0,|\ga|]\to \R^3$ be a $C^2$ simple curve parametrized by arc length. We define $\e_1(s)$, $\e_2(s)$  such that $(\ga'(s),\e_1(s),\e_2(s))$ is a direct orthonormal basis for any $s$ and $C^1$ smooth with respect to $s$.

We define the regular tubular neighborhood $T_\de(\ga)$ of $\ga$ by
$$
T_\de(\ga)=\{\ga(s)+y \ | \ s\in[0,|\ga|],\ y\perp \ga'(s),\ |y|<\de\},
$$
where its thickness $\de>0$ is assumed to be sufficiently small so that the normal projection $\Pi:T_\de(\ga)\to \ga([0,|\ga|])$ is well defined and $C^1$. In $T_\de(\ga)$, we make use of curvilinear coordinates $(s,v,w)$, that is
\begin{equation}\label{coordinates}
	x=\ga(s)+ x^\perp(s,v,w),\quad x^\perp(s,v,w) = v\e_1(s)+w\e_2(s),
\end{equation}
for any $x=x(s,v,w)\in T_\de(\ga)$. 
The volume element in these coordinates is 
\begin{equation}\label{volume}
	\dd V=|1-x^\perp\cdot\ga'|\dd s\dd v\dd w=(1+O(\de))\dd s\dd v\dd w.
\end{equation}

\subsection{Biot--Savart vector field}
We next recall a result proved in \cite{RSS2}*{Section 2}, concerning the Biot--Savart vector field associated to a smooth simple closed curve $\ga$ in $\R^3$, which is defined as 
\begin{equation*}
X_\ga(p) = \frac12\int_t \frac{\ga(t) - p}{|\ga(t) - p|^3}\times \ga'(t)\,dt.
\end{equation*}
We recall that $X_\ga$ is divergence-free, satisfies 
\begin{equation}\label{eqXga}
	\curl X_\ga = 2\pi\ga\quad \mathrm{in}\ \R^3,
\end{equation}
and belongs to $L^p_\loc(\R^3,\R^3)$ for any $1\le p<2$.

Moreover, if we let $p_\ga$ denote the nearest point to $p$ on $\ga$,
\begin{equation}\label{approxBS}X_\ga(p) - \frac{p_\ga - p}{|p_\ga - p|^2}\times\ga'(p_\ga)
\end{equation}
is in $L^q(T_\delta(\ga),\R^3)$, for any $q\ge 1$.
 
\begin{prop}\label{jA}
Assume $\ga$ is a $C^2$ simple closed curve in $\R^3$ which intersects $\partial\Omega$ transversally.
Then there exists a  unique  divergence-free $j_\ga:\Omega\to \R^3$, belonging to $L^p(\Omega,\R^3)$ for any $p<2$, and a unique divergence-free $A_\ga\in H^1(\R^3,\R^3)$ such that
$$
\left\{ 
\begin{array}{rcll}
\curl(j_\ga+A_\ga)&=& 2\pi\ga&\mathrm{in}\ \Omega\\
\nu\cdot j_\ga &=& 0&\mathrm{on}\ \partial\Omega
\end{array}
\right.
$$
and such that
$$
-\Delta A_\ga  = j_\ga\mathbf 1_\Omega \quad \mathrm{in}\ \R^3
$$
holds in the sense of distributions. 
In particular, $j_\ga$ and $A_\ga$ only depend on $\ga\cap\Omega$. It also holds that $A_\ga\in W^{2,\frac32}_\loc(\R^3,\R^3)$, that $j_\ga-X_\ga \in W^{1,q}(\Omega,\R^3)$ for any $q<4$, and that 
\begin{equation}\label{eqjOmega}
j_\ga-X_\ga+A_\ga=\nabla f_\ga\quad \mathrm{in}\ \Omega
\end{equation}
for some harmonic function $f_\ga\in W^{1,q}(\Omega)$ for any $q<4$.
\end{prop}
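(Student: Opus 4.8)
The plan is to reduce the coupled system to a well-posed linear problem by using the Biot--Savart field $X_\ga$ to absorb the nonintegrable singular part of $j_\ga$. Since $\curl X_\ga = 2\pi\ga$ in all of $\R^3$ by \eqref{eqXga}, and $\Omega$ is simply connected, any solution of $\curl(j_\ga+A_\ga)=2\pi\ga$ in $\Omega$ must satisfy $\curl(j_\ga-X_\ga+A_\ga)=0$, hence $j_\ga-X_\ga+A_\ga=\nabla f_\ga$ for some $f_\ga\colon\Omega\to\R$; this is exactly \eqref{eqjOmega}, and it reduces the unknowns to the pair $(A_\ga,f_\ga)$. Imposing $\diver j_\ga=0$ then forces $\Delta f_\ga=0$, so $f_\ga$ is harmonic, while $\nu\cdot j_\ga=0$ on $\partial\Omega$ becomes the Neumann condition $\partial_\nu f_\ga=\nu\cdot(A_\ga-X_\ga)$. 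The compatibility condition for this Neumann problem holds because $A_\ga-X_\ga$ is divergence-free (its flux through $\partial\Omega$ vanishes), so for each fixed $A$ it determines $f=f_A$ up to an additive constant, and the resulting map $A\mapsto f_A$ is linear and bounded.

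For existence and uniqueness I would argue variationally. Because $X_\ga\notin L^2(\Omega)$ by \eqref{approxBS}, the naive energy $\frac12\int_\Omega|j|^2+\frac12\int_{\R^3}|\curl A|^2$ is infinite; after writing $j=X_\ga+\tilde\jmath$ with $\tilde\jmath=\nabla f_A-A$ and discarding the fixed infinite self-energy $\frac12\int_\Omega|X_\ga|^2$, I would instead minimize the renormalized functional
$$
\mathcal{E}(A)=\frac12\int_{\R^3}|\curl A|^2+\frac12\int_\Omega|\nabla f_A-A|^2+\int_\Omega X_\ga\cdot(\nabla f_A-A)
$$
over divergence-free $A$ in the homogeneous space $\dot H^1(\R^3,\R^3)$. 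The cross term is finite since $X_\ga\in L^p(\Omega)$ for $p<2$ while $\tilde\jmath\in L^2(\Omega)$. The functional is quadratic, strictly convex, and coercive (using $\int_{\R^3}|\curl A|^2=\int_{\R^3}|\nabla A|^2$ for divergence-free decaying fields, and bounding $\|\tilde\jmath\|_{L^2}$ by the trace of $A$), so Lax--Milgram or the direct method yields a unique minimizer. Its Euler--Lagrange equations reproduce exactly $-\Delta A_\ga=j_\ga\mathbf 1_\Omega$ in $\R^3$ together with $\diver j_\ga=0$ and $\nu\cdot j_\ga=0$, and strict convexity gives uniqueness of the whole system.

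Next I would extract the regularity. From $\curl A_\ga\in L^2$ (finite energy), $\diver A_\ga=0$, and the decay of the Newtonian potential one gets $\nabla A_\ga\in L^2$; moreover $\diver j_\ga=0$ and $\nu\cdot j_\ga=0$ force $\int_\Omega j_\ga=0$, which upgrades the far-field decay of $A_\ga$ to quadratic and hence places $A_\ga\in H^1(\R^3,\R^3)$. Calder\'on--Zygmund regularity applied to $-\Delta A_\ga=j_\ga\mathbf 1_\Omega$ with $j_\ga\in L^p$, $p<2$, gives $A_\ga\in W^{2,p}_{\loc}$, in particular $A_\ga\in W^{2,3/2}_{\loc}(\R^3,\R^3)$ and $A_\ga\in W^{1,q}(\Omega)$ for $q<6$. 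For $j_\ga-X_\ga=\nabla f_\ga-A_\ga$ I would invoke elliptic regularity for the harmonic Neumann problem: the trace bound $A_\ga|_{\partial\Omega}\in H^{1/2}(\partial\Omega)$ together with the mild singularity of $\nu\cdot X_\ga$ at the two transversal crossings yields $f_\ga\in W^{2,q}(\Omega)$ for $q<4$, whence $j_\ga-X_\ga\in W^{1,q}(\Omega)$ for $q<4$ and $j_\ga\in L^p(\Omega)$ for $p<2$. Finally, the dependence of $(j_\ga,A_\ga)$ only on $\ga\cap\Omega$ follows from uniqueness, since the characterizing system refers to $\ga$ solely through $2\pi\ga$ restricted to $\Omega$ and through $j_\ga\mathbf 1_\Omega$, both of which see only $\ga\cap\Omega$.

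I expect the main obstacle to be twofold. First, renormalizing around the non-$L^2$ singularity of $X_\ga$ so that the variational problem is genuinely well posed---ensuring integrability of the cross term and coercivity of $\mathcal{E}$ on the correct homogeneous space. Second, and more delicate, obtaining the sharp exponent $q<4$ in $j_\ga-X_\ga\in W^{1,q}$, which is governed by the $W^{2,q}$ regularity of the harmonic Neumann solution $f_\ga$ and thus hinges on a careful analysis of the boundary datum $\nu\cdot X_\ga$ precisely at the two points where $\ga$ crosses $\partial\Omega$ transversally, where this datum degenerates.
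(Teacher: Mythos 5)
A preliminary remark: this paper does not prove Proposition~\ref{jA} at all --- it is imported verbatim from \cite{RSS2}*{Section 2} --- so your attempt can only be measured against the construction carried out there, and against internal consistency.

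Your overall skeleton is sound and close in spirit to the source: writing $j_\ga=X_\ga+\nabla f_A-A$ via simple-connectedness, observing that $\diver j_\ga=0$ and $\nu\cdot j_\ga=0$ then hold \emph{automatically} for every competitor (so the constraints are built in), uniqueness via strict convexity (equivalently the energy identity $\int_{\R^3}|\nabla(A_1-A_2)|^2+\int_\Omega|j_1-j_2|^2=0$), the zero-mean identity $\int_\Omega j_\ga=0$ to upgrade the far-field decay to $|x|^{-2}$ and get $A_\ga\in L^2(\R^3)$, and Calder\'on--Zygmund for $W^{2,\frac32}_\loc$. Three repairable slips: (i) your justification of the cross term is wrong as stated --- $X_\ga\in L^p$, $p<2$, paired with $\tilde\jmath\in L^2$ is \emph{not} a H\"older pairing; you must use that $\tilde\jmath$ is in fact better than $L^2$ (e.g.\ $A\in L^6$, $\nabla f^{(1)}_A\in L^6$ by $H^2$ regularity of the Neumann problem with $H^{1/2}$ data), which you have available. (ii) Varying only among divergence-free fields leaves a pressure: the Euler--Lagrange equation first reads $-\Delta A_\ga=j_\ga\mathbf 1_\Omega+\nabla p$, and you must kill $\nabla p$ using that $\diver(j_\ga\mathbf 1_\Omega)=0$ distributionally (which follows from $\diver j_\ga=0$ in $\Omega$ and $\nu\cdot j_\ga=0$). (iii) Uniqueness \emph{in the stated class} needs a bootstrap you omit: an arbitrary solution only gives $\nabla f_\ga\in L^p$, $p<2$, a priori outside your variational class; one must check the Neumann datum lies in $H^{-1/2}(\partial\Omega)$ to conclude $\nabla f_\ga\in L^2$ before convexity applies.

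The genuine gap is the sharp regularity $j_\ga-X_\ga\in W^{1,q}(\Omega)$ for all $q<4$ (equivalently $f_\ga\in W^{2,q}$), which you assert via ``elliptic regularity for the harmonic Neumann problem'' from the trace data. That mechanism quantitatively cannot deliver the exponent. Near a crossing point, flatten $\partial\Omega$ to $\{z=0\}$ with $\nu=\e_3$ and tangent $\t=(\sin\alpha,0,\cos\alpha)$; since for a straight line $X_\ga(p)=\frac{\t\times p}{\dist(p,\ga)^2}$, the leading Neumann datum is
\begin{equation*}
\nu\cdot X_\ga(x,y,0)\;\approx\;\frac{y\sin\alpha}{x^2\cos^2\alpha+y^2},
\end{equation*}
homogeneous of degree $-1$, hence in $L^q(\partial\Omega)$ only for $q<2$ and in no space $W^{1-1/q,q}(\partial\Omega)$ with $q$ near $4$ --- but $f_\ga\in W^{2,q}(\Omega)$ would \emph{require} Neumann data in $W^{1-1/q,q}(\partial\Omega)$. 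Worse, generic estimates from such data saturate far below the claim: the exact half-space solution with the model datum is the degree-zero homogeneous harmonic function (of the type $y/(|x|+z)$ in the isotropic case), whose gradient is of size $\dist(\cdot,0)^{-1}$, giving $\nabla f\in L^q$ only for $q<3$ and $D^2f\in L^q$ only for $q<\frac32$. So the stated conclusion can only come from exploiting the fine structure of the Biot--Savart kernel at the crossings and along $\ga$ (the mean-zero angular structure of the datum, the geometry of the intersection, cancellations between $X_\ga$, $Y_\ga$, $A_\ga$ and the harmonic correction), which in \cite{RSS2} is extracted by explicit computation. You correctly flag this as the crux, but the one tool you name is provably insufficient to close it, so the key regularity assertions of the proposition remain unproven in your argument.
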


An important role in the upper bound construction will be played by the following constant.

\begin{definition} Let $\ga$ be a $C^2$ simple closed curve in $\R^3$ that intersects $\partial\Omega$ transversally. We apply Proposition~\ref{jA} and define
\begin{equation}\label{comega}
C_{\Omega,\ga} = \frac12\int_{\R^3}|\curl A_\ga|^2 + \lim_{\rho\to 0}\left(\frac12\int_{\Omega\setminus \overline{T_\rho(\ga)}} |j_\ga|^2 +\pi|\ga|\log\rho\right),
\end{equation}
where $|\ga|$ denotes the length of $\ga$ in $\Omega$.
\end{definition}

\subsection{Energy splitting}
Throughout this paper, we assume that $H_\ex\in L^2_{\loc} (\R^3, \R^3)$ is such that $\diver H_\ex=0$ in $\R^3$, consistent with the non-existence of magnetic monopoles. Consequently, there exists a vector potential $A_\ex \in H^1_{\loc}(\R^3,\R^3)$ such that 
$$\curl A_\ex= H_\ex\quad \text{and} \quad \diver A_\ex=0 \ \text{in} \ \R^3.$$
The natural space for minimizing $GL_\ep$ is 
$H^1(\Omega, \C) \times [A_\ex+ H_{\curl}]$, where 
$$H_{\curl}\colonequals \{ \A \in H^1_{\loc} (\R^3, \R^3) | \curl \A \in L^2(\R^3,\R^3)\};$$ see \cite{Rom-CMP}. 

We next recall the energy splitting provided in \cite{diaz-roman-3d}*{Proposition 1.1}, which plays a crucial role in the proof of Theorem~\ref{thm:firstcriticalfield}.
\begin{prop}
Given any configuration $(\u,\A)\in \minsp$, letting $(u,A)$ be defined via the relation $(\u,\A) = (\pmin u e^{i h_\ex \phi_\ep}, A+h_\ex A^0_\ep)$, we have
\begin{equation}\label{energy splitting equation}
GL_\ep(\u,\A) = GL_\ep(\pmin e^{i h_\ex \phi_\ep}, h_\ex A^0_\ep) + \fen(u,A) -h_\ex \int_\Omega \mu(u,A) \cdot B^0_\ep + \err,
\end{equation}
where
\begin{equation}\label{err}
\err \colonequals \frac{h_\ex^2}{2}\int_\Omega \frac{|\curl B^0_\ep|^2}{\pmin^2}(|u|^2-1).
\end{equation}
\end{prop}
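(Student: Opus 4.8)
The plan is to prove \eqref{energy splitting equation} by substituting the decomposition $(\u,\A)=(\pmin u e^{ih_\ex\phi_\ep},A+h_\ex A^0_\ep)$ into each of the three terms of $GL_\ep$ and reorganizing, the point being that the defining equations of $\pmin$, $\phi_\ep$, and $A^0_\ep$ collapse every cross term into the single pairing $-h_\ex\int_\Omega\mu(u,A)\cdot B^0_\ep$ together with the error $\err$. Write $h=h_\ex$ throughout. The first step is to expand the covariant gradient. Since $|\u|=\pmin|u|$, and using the relation $A^0_\ep-\nabla\phi_\ep=\curl B^0_\ep/\pmin^2$, a direct computation gives
\[
\nabla_\A\u = e^{ih\phi_\ep}\Bigl(\pmin\,\nabla_A u + u\,\nabla\pmin - i h\, u\,\frac{\curl B^0_\ep}{\pmin}\Bigr).
\]
Squaring, and using that the real inner product on $\C\cong\R^2$ annihilates $(iu,u)$ and $(u,-iu)$, together with $(\nabla_A u,u)=\tfrac12\nabla|u|^2$ and $(\nabla_A u,-iu)=-j(u,A)$, the integrand $|\nabla_\A\u|^2$ splits as
\[
\pmin^2|\nabla_A u|^2+|u|^2|\nabla\pmin|^2+h^2|u|^2\frac{|\curl B^0_\ep|^2}{\pmin^2}+\pmin\nabla\pmin\cdot\nabla|u|^2-2h\,j(u,A)\cdot\curl B^0_\ep.
\]

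The first term above is the kinetic part of $\fen(u,A)$; setting $u\equiv1$ and $A\equiv0$ identifies $|\nabla\pmin|^2+h^2|\curl B^0_\ep|^2/\pmin^2$ as the kinetic part of the Meissner energy, and the remaining $(|u|^2-1)$-portion of the magnetic density $h^2|u|^2|\curl B^0_\ep|^2/\pmin^2$ is precisely $\err$. I would then expand the other two energies the same way: writing $a_\ep-\pmin^2|u|^2=(a_\ep-\pmin^2)+\pmin^2(1-|u|^2)$ yields the Meissner potential, the $\fen$ potential, and a cross term proportional to $(a_\ep-\pmin^2)\pmin^2(1-|u|^2)$; and writing $\curl\A-H_\ex=\curl A+(h\curl A^0_\ep-H_\ex)$ yields $\tfrac12\int_{\R^3}|\curl A|^2$, the Meissner magnetic energy, and the cross term $h\int_{\R^3}\curl A\cdot(\curl A^0_\ep-H_{0,\ex})$. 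Collecting, the Meissner contributions assemble into $GL_\ep(\pmin e^{ih\phi_\ep},hA^0_\ep)$ and the remaining $\fen$ contributions into $\fen(u,A)$, so the whole identity reduces to showing that the leftover cross terms sum to $-h\int_\Omega\mu(u,A)\cdot B^0_\ep$.

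These cross terms are of two kinds. The density-type terms $(|u|^2-1)|\nabla\pmin|^2$, $\pmin\nabla\pmin\cdot\nabla|u|^2$, and the potential cross term $(a_\ep-\pmin^2)\pmin^2(1-|u|^2)/\ep^2$ are handled by a Lassoued--Mironescu computation: integrating $\pmin\nabla\pmin\cdot\nabla|u|^2$ by parts and substituting the weight equation $-\Delta\pmin=\pmin(a_\ep-\pmin^2)/\ep^2$ with $\partial_\nu\pmin=0$ makes them cancel. The magnetic-type terms require the Euler--Lagrange equation of the Meissner state: differentiating the constrained energy in the vector potential gives $\curl(\curl A^0_\ep-H_{0,\ex})=-\curl B^0_\ep\,\mathbf1_\Omega$ in $\R^3$. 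Integrating the magnetic cross term by parts twice against this identity, and using $\diver B^0_\ep=0$ in $\Omega$ with $B^0_\ep\times\nu=0$ on $\partial\Omega$ to discard boundary contributions, gives $h\int_{\R^3}\curl A\cdot(\curl A^0_\ep-H_{0,\ex})=-h\int_\Omega\curl A\cdot B^0_\ep$; the same boundary condition turns the current cross term into $-h\int_\Omega j(u,A)\cdot\curl B^0_\ep=-h\int_\Omega\curl j(u,A)\cdot B^0_\ep$. Adding these and recalling $\mu(u,A)=\curl j(u,A)+\curl A$ produces exactly $-h\int_\Omega\mu(u,A)\cdot B^0_\ep$, completing the identity.

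The step I expect to be the main obstacle is the magnetic cross term: it requires having the Meissner Euler--Lagrange equation (equivalently, the div--curl system characterizing $B^0_\ep$) available in a usable form, and justifying the two integrations by parts---namely the decay at infinity needed for the integration over $\R^3$ and the vanishing of the boundary terms on $\partial\Omega$, both of which rest on the regularity $B^0_\ep\in C_T^{0,1}(\Omega,\R^3)$ and the conditions $\diver B^0_\ep=0$, $B^0_\ep\times\nu=0$. By contrast, the density-type cancellation is a routine application of the equation for $\pmin$, and the bookkeeping that separates the Meissner and $\fen$ contributions is purely algebraic.
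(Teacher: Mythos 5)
Your proposal is correct, and it is essentially the standard argument for this statement: the paper itself only recalls the proposition from \cite{diaz-roman-3d}*{Proposition 1.1}, and the proof given there proceeds exactly as you do, expanding the three terms of $GL_\ep$ under the substitution $(\u,\A)=(\pmin u e^{ih_\ex\phi_\ep^0},A+h_\ex A^0_\ep)$, cancelling the density-type cross terms via the Lassoued--Mironescu trick using $-\Delta\pmin=\frac{\pmin}{\ep^2}(a_\ep-\pmin^2)$ with $\partial_\nu\pmin=0$, and converting the current and magnetic cross terms into $-h_\ex\int_\Omega\mu(u,A)\cdot B^0_\ep$ through the Meissner Euler--Lagrange equation and integrations by parts justified by $\diver B^0_\ep=0$ and $B^0_\ep\times\nu=0$ on $\partial\Omega$. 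Your sign bookkeeping (in particular $\curl(\curl A^0_\ep-H_{0,\ex})=-\curl B^0_\ep\,\mathbf{1}_\Omega$ and the identification of $\err$ as the $(|u|^2-1)$-portion of the Meissner kinetic density) checks out, so there is nothing to correct.
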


\section{Upper bound for the free energy}\label{sec:proofmain1}
In this section we provide a proof for Theorem~\ref{thm:upperbound}.
\begin{proof}[Proof of Theorem~\ref{thm:upperbound}] 
We first let $r_\ep\colonequals\lep^{-q}$, for some $q>1$ to be fixed later, and note that $\ga$ can be extended to a $C^2$ simple closed curve in $\R^3$, with the extension being supported on the complement of $\overline{\Omega}$. Throughout this proof, although the curve $\ga$ has been extended, its length is consistently measured with respect to its original definition within $\Omega$.

We will construct a complex-valued function $u_\ep$ of the form 
$$
u_\ep(x)\colonequals |u_\ep|(x) e^{i\varphi(x)}.
$$ 
Recalling Section~\ref{sec:coordinates}, we start by defining its modulus as
$$
|u_\ep|(x)=
\left\{
\begin{array}{cl}
	\dfrac1{\degone \left(\frac{r_\ep}\ep\right)}\degone \(\dfrac{\dist(x,\ga)}\ep \) & \mbox{if }x\in T_{r_\ep}(\ga)\\
	1&\mbox{if }x\in \Omega \setminus T_{r_\ep}(\ga),
\end{array}
\right.
$$
where hereafter $\degone$ denotes the modulus of the degree-one radial vortex solution $u_0$ that appears in \cite{SanSerBook}*{Proposition 3.11}. We recall from this book that $\lim\limits_{R\to \infty}\degone(R)\to 1$ and 
\begin{equation}\label{asymptoticsf}
\lim_{R\to \infty}\left( \frac12 \int_0^R \(|\degone'|^2+\frac{\degone^2}{r^2}+\frac{(1-\degone^2)^2}2 \)r\dd r- \frac{1}{2\pi}\left(\pi \log R+\gamma\right) \right)=0,
\end{equation}
where $\gamma>0$ is the universal constant first introduced in the seminal work \cite{BetBreHel}. 

We now proceed to define the phase of $u_\ep$. We let $\varphi$ be defined via
\begin{equation}\label{derivativephi}
	\nabla \varphi= X_\ga+\nabla f_\ga\quad \mathrm{in}\ \Omega,
\end{equation}
where $X_\ga$ and $f_\ga$ are defined by applying Proposition~\ref{jA}. Let us remark that $\curl X_\ga=2\pi\ga$ in $\R^3$ guaranties that $\varphi$ is a well-defined function in $\Omega$ modulo $2\pi$.

Finally, we apply once again Proposition~\ref{jA}, in order to define
$$
A(x)=A_{\ga}(x).
$$

The rest of the proof is divided in several steps.

\begin{enumerate}[label=\textsc{\bf Step \arabic*.},leftmargin=0pt,labelsep=*,itemindent=*,itemsep=10pt,topsep=10pt]
	
\item \textbf{Estimating $|\nabla_A u_\ep|^2-|\nabla u_\ep|^2$ over $T_{r_\ep}(\ga)\cap \Omega$.}	

A straightforward computation shows that
$$
|\nabla_A u_\ep|^2-|\nabla u_\ep|^2=|u_\ep|^2\left(|A|^2-2\nabla \varphi\cdot A\right).
$$
We denote hereafter $T_{r_\ep}^\Omega(\ga)\colonequals T_{r_\ep}(\ga)\cap \Omega$.
Since $|u_\ep|^2 \leq 1$, we apply H\"older's inequality to obtain
$$
\left| \int_{T_{r_\ep}^\Omega(\ga)} |\nabla_A u_\ep|^2-|\nabla u_\ep|^2\right|\leq \|A\|^2_{L^2(T_{r_\ep}^\Omega(\ga),\R^3)} + 2\|\nabla \varphi\|_{L^{\frac32}(T_{r_\ep}^\Omega(\ga),\R^3)} \|A\|_{L^3(T_{r_\ep}^\Omega(\ga),\R^3)}.
$$
Since $A\in W^{2,\frac32}(\Omega,\R^3)$, from Sobolev embedding it follows that $A\in L^r(\Omega,\R^3)$ for any $r\geq 1$. In particular, from the Cauchy--Schwarz inequality, we obtain
$$
\|A\|_{L^3(T_{r_\ep}^\Omega(\ga),\R^3)}\leq \|A\|_{L^6(T_{r_\ep}^\Omega(\ga),\R^3)}|T_{r_\ep}^\Omega(\ga)|^\frac16\leq Cr_\ep^\frac13=o_\ep(1).
$$
Moreover, $\nabla \varphi \in L^r(\Omega,\R^3)$ for any $r<2$, which follows from Proposition \ref{jA}. Hence, 
\begin{equation}\label{covariantderivativeenergy}
	\left| \int_{T_{r_\ep}^\Omega(\ga)} |\nabla_A u_\ep|^2-|\nabla u_\ep|^2\right|\leq C\|A\|_{L^3(T_{r_\ep}^\Omega(\ga),\R^3)}=o_\ep(1).
\end{equation}

\item \textbf{Energy estimate over $T_{r_\ep}^\Omega(\ga)$.}

We claim that
\begin{equation}\label{energyestimatesmalltube}
	\frac12\int_{T_{r_\ep}^\Omega(\ga)}\pmin^2|\nabla u_\ep|^2+\frac{\pmin^4}{2\ep^2}(1-|u_\ep|^2)^2\leq \pi|\pmin^2\ga|\log \frac{r_\ep}{\ep}+\gamma|\ga|+o_\ep(1),
\end{equation}
where $\gamma$ is the constant that appears in \eqref{asymptoticsf}.

Let us start by observing that, since $\pmin\leq 1$, we have
$$
\frac12\int_{T_{r_\ep}^\Omega(\ga)}\pmin^2|\nabla u_\ep|^2+\frac{\pmin^4}{2\ep^2}(1-|u_\ep|^2)^2\leq
\frac12\int_{T_{r_\ep}^\Omega(\ga)}\pmin^2\left(|\nabla u_\ep|^2+\frac{1}{2\ep^2}(1-|u_\ep|^2)^2\right).
$$
Next, given a point $p\in T_{r_\ep}(\ga)$, we denote by $p_{\ga}$ its nearest point on $\ga$ and let
$$
D_s\colonequals \left\{p\in T_{r_\ep}(\ga) \ :\ p_{\ga}=\ga(s) \right\}.
$$
Notice that $D_s$ is a disk in $\R^2$ of radius $r_\ep$ centered at $\ga(z)$. 

Recalling \eqref{approxBS}, we find that
$$
h_{\ga}(p)\colonequals X_\ga(p) - Y_\ga(p)\in L^q\left( T_{r_\ep}(\ga),\R^3\right)
$$
for any $q\geq 1$, where
$$
Y_{\ga}(p)\colonequals \frac{p_{\ga} - p}{|p_{\ga} - p|^2}\times \ga'(p_{\ga}).
$$

Recalling \eqref{derivativephi} and applying H\"older's inequality, we deduce that
\begin{equation}\label{difphiY}
\int_{T_{r_\ep}^\Omega(\ga)}\left| \nabla \varphi - Y_{\ga} \right |^2 =\int_{T_{r_\ep}^\Omega(\ga)}\left|h_{\ga}+\nabla f_\ga \right|^2
\\
\leq |T_{r_\ep}^\Omega(\ga)|^\frac13 \left\| h_{\ga}+\nabla f_{\ga}\right\|_{L^3(T_{r_\ep}^\Omega(\ga),\R^3)}^2\leq C r_\ep^\frac23=o_\ep(1),
\end{equation}
where we used the fact that $h_\ga+\nabla f_\ga \in L^3(T_{r_\ep}^\Omega(\ga),\R^3)$.

Using \eqref{difphiY}, and recalling that $|u_\ep| \leq 1$ and $\pmin \leq 1$, we deduce that
\begin{multline}\label{energysmalltube1}
	 \int_{T_{r_\ep}^\Omega(\ga)}\pmin^2\left(|\nabla u_\ep|^2+\frac1{2\ep^2}(1-|u_\ep|^2)^2\right)\\
	=\int_{T_{r_\ep}^\Omega(\ga)}\pmin^2\left(|\nabla |u_\ep||^2+|u_\ep|^2|\nabla \varphi|^2+\frac1{2\ep^2}(1-|u_\ep|^2)^2\right)
	\\
	=\int_{T_{r_\ep}^\Omega(\ga)}\pmin^2\left(|\nabla |u_\ep||^2+|u_\ep|^2|Y_\ga|^2+\frac1{2\ep^2}(1-|u_\ep|^2)^2\right)+o_\ep(1).
\end{multline}
Using the coordinates defined in \eqref{coordinates}, and recalling \eqref{volume}, we then find
\begin{multline*}
	\int_{T_{r_\ep}^\Omega(\ga)}\pmin^2\left(|\nabla |u_\ep||^2+|u_\ep|^2|Y_\ga|^2+\frac1{2\ep^2}(1-|u_\ep|^2)^2\right)\\
	=\int_0^{|\ga|} \left( \int_{D_s \cap \Omega }\pmin^2\left(|\nabla |u_\ep||^2+|u_\ep|^2|Y_\ga|^2+\frac1{2\ep^2}(1-|u_\ep|^2)^2\right)(1+O_\ep(r_\ep))\dd v\dd w\right)\dd s.
\end{multline*}
Combining this with \eqref{energysmalltube1}, we are led to
\begin{multline}\label{energysmalltube2}
	\int_{T_{r_\ep}^\Omega(\ga)}\pmin^2\left(|\nabla u_\ep|^2+\frac1{2\ep^2}(1-|u_\ep|^2)^2\right)
	\\
	\leq  (1+O_\ep(r_\ep))\int_0^{|\ga|} \left( \int_{D_s \cap \Omega }\pmin^2\left(|\nabla |u_\ep||^2+|u_\ep|^2|Y_\ga|^2+\frac1{2\ep^2}(1-|u_\ep|^2)^2\right)\dd v\dd w\right)\dd s +o_\ep(1).
\end{multline}
Let us now deal with the weight. From the hypothesis \eqref{hypothesispmin} on $\pmin$, we observe that for any point $x = x(v, w, s) \in D_s \cap \Omega$, we have, using once again that $\pmin\leq 1$,
\begin{multline*}
|\pmin^2(x)-\pmin^2(\ga(s))|=|\pmin(x)-\pmin(\ga(s))||\pmin(x)+\pmin(\ga(s))|\\
\leq 2r_\ep^\alpha\|\pmin\|_{C^{0,\alpha}(T_{r_\ep}^\Omega(\ga))}\leq 2C_1r_\ep^\alpha \lep^N.
\end{multline*}
In particular, for any $q>\frac{N}{\alpha}$, we deduce that the RHS is $o_\ep(1)$.
Hence, using that $D_s\cap\Omega \subset D_s$, we find
\begin{multline}\label{energysmalltube3}
\int_0^{|\ga|} \left( \int_{D_s \cap \Omega }\pmin^2\left(|\nabla |u_\ep||^2+|u_\ep|^2|Y_\ga|^2+\frac1{2\ep^2}(1-|u_\ep|^2)^2\right)\dd v\dd w\right)\dd s\\
\leq \int_0^{|\ga|} \pmin^2(\ga(s)) \left( \int_{D_s }\left(|\nabla |u_\ep||^2+|u_\ep|^2|Y_\ga|^2+\frac1{2\ep^2}(1-|u_\ep|^2)^2\right)\dd v\dd w\right)\dd s+o_\ep(1).
\end{multline}
Notice that we conveniently defined $|u_\ep|$ slightly outside $\Omega$, so that we do not have to perform a special computation next depending on the endpoints of $\ga$, viewed as the original curve without extension beyond $\overline\Omega$.

To compute the integral over $D_s$ we use polar coordinates $(r,\theta)$ centered at $\ga(s)$. We have
$$
|\nabla |u_\ep|(x)|=\frac{\left|\degone'\left(\frac{r}{\ep}\right)\right|}{\ep \degone\left(\frac{r_\ep}\ep\right)}|\nabla r|=\frac{\left|\degone'\left(\frac{r}{\ep}\right)\right|}{\ep \degone\left(\frac{r_\ep}\ep\right)} \quad \mbox{and}\quad |Y_\ga|=\frac{r}{r^2}|\ga'(s)|=\frac1r.
$$
It follows that
\begin{multline*}
	\int_{D_s}\left(|\nabla |u_\ep||^2+|u_\ep|^2|Y_{\ga}|^2+\frac1{2\ep^2}(1-|u_\ep|^2)^2\right)\dd v\dd w\\
	=2\pi\int_0^{r_\ep}\( \frac{\degone'\left(\frac{r}{\ep}\right)^2}{\ep^2\degone\left(\frac{r_\ep}\ep\right)^2}+\frac{\degone\left(\frac{r}{\ep}\right)^2}{r^2\degone\left(\frac{r_\ep}\ep\right)^2}+\frac1{2\ep^2}\left(1-\frac{\degone\left(\frac{r}{\ep}\right)^2}{\degone\left(\frac{r_\ep}\ep\right)^2}\right)^2\)r\dd r\\
	=2\pi \int_0^{\frac{r_\ep}{\ep}}\( \frac{\degone'(t)^2}{\degone\left(\frac{r_\ep}\ep\right)^2}+\frac{\degone(t)^2}{s^2\degone\left(\frac{r_\ep}\ep\right)^2}+\frac1{2}\left(1-\frac{\degone(t)^2}{\degone\left(\frac{r_\ep}\ep\right)^2}\right)^2\)t\dd t,
\end{multline*}
where in the last equality we used the change of variables $r=\ep t$. Finally, using the fact that $\lim\limits_{\ep \to 0} \degone\left(\frac{r_\ep}{\ep}\right) = 1$, which follows from $\lim\limits_{\ep \to 0} \frac{r_\ep}{\ep} = +\infty$, we deduce from \eqref{asymptoticsf} that
$$
\frac12\int_{D_s}\left(|\nabla |u_\ep||^2+\rho_\ep^2|Y_\ga|^2+\frac1{2\ep^2}(1-|u_\ep|^2)^2\right)\dd v \dd w=\left(\pi \log \frac{r_\ep}{\ep}+\gamma +o_\ep(1)\right).
$$
By combining this with \eqref{energysmalltube2} and \eqref{energysmalltube3}, we are led to
\begin{multline*}
\frac12\int_{T_{r_\ep}^\Omega(\ga)}\pmin^2\left(|\nabla u_\ep|^2+\frac1{2\ep^2}(1-|u_\ep|^2)^2\right)\\
\leq \frac12\left(1+O_\ep(r_\ep)\right) \int_{0}^{|\ga|}\pmin^2(\ga(s))\left(\pi \log \frac{r_\ep}{\ep}+\gamma +o_\ep(1)\right) \dd s
\leq \pi|\pmin^2\ga|\frac{r_\ep}{\ep}+|\ga|\gamma +o_\ep(1),
\end{multline*}
where in the last inequality we used that $q>1$, which implies that $O_\ep(r_\ep)\log \frac{r_\ep}{\ep}=o_\ep(1)$. 
The claim \eqref{energyestimatesmalltube} is thus proved.

\item \textbf{Energy estimate outside $T_{r_\ep}^\Omega(\ga)$.} 

In the region $\Omega\setminus \overline{T_{r_\ep}(\ga)}$, we have $|u_\ep|\equiv 1$ and therefore, using  \eqref{eqjOmega} and \eqref{derivativephi}, we find
$$
|\nabla_A u_\ep|=|\nabla \varphi -A|=\left| X_\ga+\nabla f_\ga-A_\ga\right|=\left|j_\ga\right|.
$$
Using once again that $|u_\ep|\equiv 1$ in $\Omega\setminus \overline{T_{r_\ep}(\ga)}$ and that $\rho_\ep\leq 1$ in $\Omega$, we have
$$
\frac12 \int_{\Omega\setminus \overline{T_{r_\ep}(\ga)}}\rho_\ep^2|\nabla_A u_\ep|^2+\frac{\rho_\ep^4}{2\ep^2}(1-|u_\ep|^2)^2 +\frac12 \int_{\R^3}|\curl A|^2\leq I_\ep,
$$
where
$$
I_\ep\colonequals \frac12 \int_{\Omega\setminus \overline{T_{r_\ep}(\ga)}}|\nabla_A u_\ep|^2 +\frac12 \int_{\R^3}|\curl A|^2
=\frac12\int_{\Omega\setminus \overline{T_{r_\ep}(\ga)}}\left| j_\ga\right|^2+\frac12 \int_{\R^3}\left|\curl A_\ga\right|^2.
$$
Recalling \eqref{comega}, we obtain
\begin{equation*}
	I_\ep= C_{\Omega,\ga} +\pi |\ga|\log r_\ep +o_\ep(1).
\end{equation*}
Hence
\begin{equation}\label{energyestimateoutside}
\frac12 \int_{\Omega\setminus \overline{T_{r_\ep}(\ga)}}\rho_\ep^2|\nabla_A u_\ep|^2+\frac{\rho_\ep^4}{2\ep^2}(1-|u_\ep|^2)^2 +\frac12 \int_{\R^3}|\curl A|^2\leq \pi|\ga|\log r_\ep +C_{\Omega,\ga}
+o_\ep(1).
\end{equation}

\item \textbf{Final energy estimate.}

By combining the main estimates from the previous steps, namely \eqref{covariantderivativeenergy}, \eqref{energyestimatesmalltube}, and \eqref{energyestimateoutside}, we are led to
\begin{align*}
\fen(u_\ep,A)=\frac12 \int_\Omega \pmin^2|\nabla_A u_\ep|^2 +&\frac{\pmin^4}{\ep^2}(1-|u_\ep|^2)^2+\frac12 \int_{\R^3}|\curl A|^2\\
&\leq \pi|\pmin^2\ga|\lep +\pi|\ga|\log r_\ep+C_{\Omega,\ga}+\gamma|\ga|+o_\ep(1)\\
&\leq \pi|\pmin^2\ga|\lep +\left(\frac{N}{\alpha}+1\right)\pi|\ga|\log \lep+C_{\Omega,\ga}+o_\ep(1),
\end{align*}
where, in the last inequality, we fixed the exponent $q = \frac{N}{\alpha} + 1 > 1$ so that $r_\ep = \lep^{-\left(\frac{N}{\alpha}+1\right)}$, which allows us to absorb the term $\gamma|\ga|$. We remark that the choice of $q$ is arbitrary among numbers strictly larger than $\max\{\frac{N}{\alpha},1\}$. This completes the proof of \eqref{construction:upperbound}.

\item \textbf{Vorticity estimate.}
Let $B\in C^{0,1}_T(\Omega,\R^3)$. By integration by parts, recalling \eqref{defvorticity} and that $|u_\ep|\equiv 1$ in $\Omega\setminus T_{r_\ep}(\ga)$, we have
\begin{align}\label{vortest1}
	\int_\Omega\mu(u_\ep,A)\cdot B&= \int_\Omega \curl \(j(u_\ep,A)+A\)\cdot B =\int_\Omega \left( j(u_\ep,A)+A\right) \cdot \curl B\notag \\
	&=\int_\Omega |u_\ep|^2\nabla \varphi\cdot \curl B+\int_\Omega (1-|u_\ep|^2)A \cdot \curl B\notag \\
	&=\int_\Omega \nabla \varphi \cdot \curl B + \int_{T_{r_\ep}^\Omega(\ga)} (1-|u_\ep|^2)(A -\nabla \varphi)\cdot \curl B.
\end{align}
We start by estimating the second integral in the RHS of \eqref{vortest1}. Since $A -\nabla \varphi=j_\ga \in L^q(\Omega,\R^3)$ for $q<2$, using that $1-|u_\ep|^2\leq 1$ and $b\leq \pmin^2\leq 1$, by applying H\"older's inequality we deduce that
\begin{align}
	\left|\int_{T_{r_\ep}^\Omega(\ga)} (1-|u_\ep|^2) j_\ga \cdot \curl B\right|
	&\leq \|\curl B\|_{L^\infty(\Omega,\R^3)}\|j_\ga\|_{L^{\frac32}(\Omega,\R^3)}\|(1-|u_\ep|^2)\|_{L^3(T_{r_\ep}^\Omega(\ga),\R^3)} \notag \\
	&\leq C\|\curl B\|_{L^\infty(\Omega,\R^3)}\|(1-|u_\ep|^2)\|_{L^2(T_{r_\ep}^\Omega(\ga),\R^3)}^\frac{2}{3} \notag \\
	&\leq C\|\curl B\|_{L^\infty(\Omega,\R^3)}\ep^\frac{2}{3}\left(\int_\Omega \frac{\pmin^4}{\ep^2}(1-|u_\ep|^2)^2\right)^\frac13 \notag \\
	&\leq C\|\curl B\|_{L^\infty(\Omega,\R^3)}\ep^\frac{2}{3}\lep^\frac13,\label{vortest2}
\end{align}
where in the last inequality we used that
$$
\int_\Omega \frac{\pmin^4}{\ep^2}(1-|u_\ep|^2)^2\leq C|\log \ep|
$$ 
in view of \eqref{energyestimatesmalltube}.

\medskip
We now proceed to compute the first integral in the RHS of \eqref{vortest1}. Using \eqref{derivativephi} and an integration by parts, we obtain
\begin{equation}\label{vortest3}
	\int_\Omega \nabla \varphi \cdot \curl B=  \int_\Omega\left(X_\ga+\nabla f_\ga\right)\cdot \curl B= \int_\Omega \curl X_\ga \cdot B.
\end{equation}
Finally, recalling \eqref{eqXga}, from \eqref{vortest1}, \eqref{vortest2}, and \eqref{vortest3}, we deduce that
\begin{equation*}
	\left\|\mu(u_\ep,A)-2\pi \ga \right\|_{(C_T^{0,1}(\Omega,\R^3))^*}\leq C\ep^\frac{2}{3}\lep^\frac13.
\end{equation*}
This concludes the proof of \eqref{construction:vorticityestimate} for $\beta=1$. The proof of the estimate for $\beta\in(0,1)$ follows from interpolation. In fact, from \cite{roman-vortex-construction}*{Lemma 8.1}, we have that
$$
\|\mu(u_\ep,A)\|_{(C_0^0(\Omega,\R^3))^*}\leq C\left(\frac12 \int_{\Omega}|\nabla_A u_\ep|^2+\frac1{2\ep^2}(1-|u_\ep|^2)^2+|\curl A|^2\right).
$$
Since $\rho_\ep \geq b$, we deduce that the RHS is bounded above by $C\fen(u_\ep,A)$. Using \eqref{construction:upperbound} and $\|\ga\|_{(C^0(\Omega))^*}\leq C$, we then deduce that
$$
\|\mu(u_\ep,A)-2\pi\ga\|_{(C_0^0(\Omega,\R^3))^*}\leq C\lep.
$$ 
To conclude, we use interpolation (see, for instance, \cite{JerMonSte}, which builds upon \cite{JerSon})
\begin{multline*}
\left\|\mu(u_\ep,A)-2\pi \ga \right\|_{(C_T^{0,\beta}(\Omega,\R^3))^*}\\ \leq \left\|\mu(u_\ep,A)-2\pi \ga \right\|_{(C_T^{0,1}(\Omega,\R^3))^*}^\beta \left\|\mu(u_\ep,A)-2\pi \ga \right\|_{(C_0^0(\Omega,\R^3))^*}^{1-\beta}\leq C\ep^{\frac{2}{3}\beta}|\log\ep|^{1-\frac23 \beta}.
\end{multline*}
This concludes the proof of \eqref{construction:vorticityestimate} for $\beta\in(0,1)$.
\end{enumerate}
\end{proof}

\section{Upper bound for the first critical field}\label{sec:proofmain2}
In this section we provide a proof for Theorem~\ref{thm:firstcriticalfield}.
\begin{proof}[Proof of Theorem~\ref{thm:firstcriticalfield}]
We consider, for any $\ep$ sufficiently small, curves $\ga_\ep$ such that \eqref{almostopt}, \eqref{boundlength}, and \eqref{boundCOmega} hold. 
By applying Theorem~\ref{thm:upperbound} with $\ga=\ga_\ep$, we obtain a pair $(u_\ep,A_\ep)\in H^1(\Omega,\C)\times H^1(\R^3,\R^3)$ such that
\begin{multline}\label{enspl1}
\fen(u_\ep,A_\ep)=\frac12\int_\Omega\pmin^2|\nabla_{A_\ep} u_\ep|^2+\frac{\pmin^4}{2\ep^2}(1-|u_\ep|^2)^2+\frac12\int_{\R^3}|\curl A_\ep|^2
\\ \leq \pi |\pmin^2 \ga_\ep|\lep +\left(\frac{N}{\alpha}+1\right)\pi \log\lep+C_{\Omega,\ga_\ep}+o_\ep(1)
\end{multline}
and, for any $\beta\in(0,1]$,
\begin{equation}\label{vortestconfig}
\|\mu(u_\ep,A_\ep)-2\pi \ga_\ep\|_{(C_T^{0,\beta}(\Omega,\R^3))^*}\leq C\ep^{\frac23 \beta}\lep^{1-\frac23 \beta} .
\end{equation}
Notice, in particular, that if assumption \eqref{boundlength} does not hold, then it becomes necessary to carefully track the dependence on the curve length in the constants appearing throughout the proof of Theorem~\ref{thm:upperbound}. The same applies to the present proof, where we will repeatedly use the bound $|\pmin^2 \ga_\ep| \leq |\ga_\ep| \leq C_0$.

\smallskip
We now consider the configuration
$$
(\u_\ep,\A_\ep)=(\meisdecom),
$$
where $(\meisconf)$ is the Meissner state. Using the energy splitting \eqref{energy splitting equation}, we find
\begin{equation}\label{enspl2}
GL_\ep(\u_\ep,\A_\ep) = GL_\ep(\pmin e^{i h_\ex \phi_\ep}, h_\ex A^0_\ep) + \fen(u_\ep,A_\ep) - h_\ex \int_\Omega \mu(u_\ep,A_\ep) \cdot B^0_\ep+\err. 
\end{equation}
Let us estimate $\err$. From \eqref{err}, using H\"{o}lder's inequality and $b\leq \pmin^2\leq 1$, we are led to
$$
\err\leq C h_\ex^2 \ep \|\curl B_\ep^0\|_{L^4(\Omega)}^2\left(\int_\Omega \frac{\pmin^4}{\ep^2}(1-|u_\ep|^2)^2\right)^\frac12.
$$
From \eqref{construction:upperbound}, we find
$$
\int_\Omega \frac{\pmin^4}{\ep^2}(1-|u_\ep|^2)^2\leq C\lep.
$$
On the other hand, from \cite{diaz-roman-3d}*{Proposition 2.2}, we have
$$
\|\curl B_\ep^0\|_{L^4(\Omega)}^2\leq C,
$$
where $C$ does not depend on $\ep$. 
Since we assume \eqref{hypothesish_ex}, we then deduce that
$$
\err\leq C \ep^{1-2\eta}\lep^\frac12=o_\ep(1).
$$

Using \eqref{vortestconfig}, \eqref{hypothesish_ex}, and the fact that $B_\ep^0\in C_T^{0,1}(\Omega,\R^3)$ with $\|B_\ep^0\|_{C_T^{0,\beta}}\leq C$ for any $\beta\in (0,1)$, where the constant $C$ does not depend on $\ep$ (see once again \cite{diaz-roman-3d}*{Proposition 2.2}), we find
$$
 h_\ex \int_\Omega \mu(u_\ep,A_\ep) \cdot B_\ep^0=2\pi h_\ex \ip{\ga_\ep}{B_\ep^0}+o_\ep(1)=2\pi h_\ex\RR_{\pmin^2}(\ga_\ep)|\pmin^2 \ga_\ep|+o_\ep(1).
$$
Inserting $h_\ex\geq H_{c_1}^\ep+K^0\log\lep=\frac{\lep}{2\RR_\ep}+K^0\log\lep$ and using \eqref{almostopt}, we then obtain
$$
h_\ex \int_\Omega \mu(u_\ep,A_\ep) \cdot B_\ep^0\geq \pi|\pmin^2 \ga_\ep|\lep+2\pi K^0 \RR_\ep |\pmin^2 \ga_\ep|\log\lep +O_\ep(1).
$$
Using this together with \eqref{enspl1} in \eqref{enspl2}, we deduce that
\begin{multline}\label{intermediate}
GL_\ep(\u_\ep,\A_\ep)\leq GL_\ep(\pmin e^{i h_\ex \phi_\ep}, h_\ex A^0_\ep) \\
+\(\frac{N}{\alpha}+1\)\pi \log\lep +C_{\Omega,\ga_\ep}-2\pi K^0\RR_\ep |\pmin^2 \ga_\ep|\log\lep +O_\ep(1).
\end{multline}

\smallskip
Let us now show that there exists $c_0>0$ (independent of $\ep$) such that
\begin{equation}\label{boundbelowlength}
|\pmin^2\ga_\ep|\geq c_0>0.
\end{equation}
We start by closing $\ga_\ep$ by connecting its endpoints with a curve lying on $\partial \Omega$. More precisely, we consider an arbitrary smooth curve on $\partial \Omega$ that connects the endpoints of $\ga_\ep$, oriented consistently with the orientation of $\ga_\ep$ and such that, if we denote by $\tilde\ga_\ep$ the resulting loop, we have $|\tilde \ga_\ep|\leq C|\ga_\ep|$. Since $B_\ep^0 \times \nu = 0$ on $\partial \Omega$, by Stokes' theorem, we have
$$
\ip{\ga_\ep}{B_\ep^0}=\ip{\tilde\ga_\ep}{B_\ep^0}=\int_{S_{\ga_\ep}} \curl B_\ep^0,
$$
where $S_{\ga_\ep}$ denotes a surface with least area among those whose boundary is $\tilde \ga_\ep$, i.e. a solution to the associated Plateau's problem. 

By H\"{o}lder's inequality and the isoperimetric inequality, we have
$$
\int_{S_{\ga_\ep}}|\curl B_\ep^0|\leq \|\curl B_\ep^0\|_{L^4(\Omega,\R^3)} \mathrm{Area}(S_{\ga_\ep})^\frac34\leq C (|\tilde \ga_\ep|^2)^\frac34\leq C|\ga_\ep|^\frac32 \leq C |\pmin^2 \ga_\ep|^\frac32.
$$ 
Therefore,
$$
\RR_{\pmin^2}(\ga_\ep) \leq C|\pmin^2\ga_\ep|^\frac12,
$$
which combined with $\liminf\limits_{\ep\to0}\RR_\ep>0$ and \eqref{almostopt} yields the claim.

\smallskip
By combining \eqref{boundbelowlength} and \eqref{boundCOmega} with \eqref{intermediate}, we are led to
\begin{multline*}
GL_\ep(\u_\ep,\A_\ep)\leq GL_\ep(\pmin e^{i h_\ex \phi_\ep}, h_\ex A^0_\ep) \\
+\left(\(\frac{N}{\alpha}+1\)\pi +C_0-2\pi c_0 K^0\frac12 \liminf_{\ep\to 0}\RR_\ep \right) \log\lep +O_\ep(1),
\end{multline*}
where we also used that $R_\ep\geq \frac12 \liminf_{\ep\to 0}>0$ for any $\ep$ sufficiently small.

Hence, provided
$$
K^0>\frac{\(\frac{N}{\alpha}+1\)\pi+C_0}{\pi c_0\liminf\limits_{\ep\to 0}\RR_\ep}+1, 
$$ 
we have 
\begin{equation}\label{finalest}
GL_\ep(\u_\ep,\A_\ep)\leq GL_\ep(\pmin e^{i h_\ex \phi_\ep}, h_\ex A^0_\ep) -\log\lep +O_\ep(1).
\end{equation}
On the other hand, by \cite{diaz-roman-3d}*{Theorem 1.2}, since we assume $h_\ex\leq \ep^{-\eta}$, for $\eta\in \(0,\frac12\)$ (recall \eqref{hypothesish_ex}), we know that if $(\u,\A)$ is a configuration without vortex lines --- that is, $|\u| > c > 0$ for some $c > 0$ --- such that
$$
GL_\ep(\u,\A) \leq GL_\ep(\pmin e^{i h_\ex \phi_\ep}, h_\ex A^0_\ep),
$$
then necessarily
$$
GL_\ep(\u,\A)=GL_\ep(\pmin e^{i h_\ex \phi_\ep}, h_\ex A^0_\ep)+o_\ep(1).
$$
Combining this with \eqref{finalest}, yields
$$
GL_\ep(\u,\A)>GL_\ep(\u_\ep,\A_\ep)+\frac12 \log\lep.
$$
We have thus constructed a configuration $(\u_\ep,\A_\ep)$ with a single vortex line located at $\ga_\ep$, such that its energy is strictly less than any configuration without vortex lines. Hence the global minimizer of the energy $GL_\ep$ must have vortex lines provided
$$
h_\ex \geq H_{c_1}^\ep +K^0\log\lep.
$$
This concludes the proof.
\end{proof}

%\section*{Data availability statement}
%No datasets were generated or analyzed during the current study.
%\section*{Conflict-of-interest statement}
%The authors declare that they have no conflicts of interest.
%\bibliographystyle{abbrv}
\bibliography{references3DPinning}
\end{document}